\documentclass[11pt]{article}

\usepackage{amsmath,amsthm,amssymb,amsfonts}
\usepackage{mathtools}
\usepackage{geometry}
\usepackage{enumitem}
\usepackage{hyperref}

\newtheorem{theorem}{Theorem}[section]
\newtheorem{proposition}[theorem]{Proposition}
\newtheorem{lemma}[theorem]{Lemma}
\newtheorem{corollary}[theorem]{Corollary}
\newtheorem{definition}[theorem]{Definition}

\theoremstyle{remark}
\newtheorem{remark}{Remark}

\newcommand{\R}{\mathbb R}

\newcommand{\sub}{\subset}
\newcommand{\eps}{\varepsilon}

\title{
Schanuel Integration and Euler Characteristic
of Semi-algebraic Sets
}

\author{Yuxuan Xiao\thanks{Department of Mathematics,
HKUST.
Email: yxiaoce@connect.ust.hk}
}

\date{}

\begin{document}

\maketitle

\begin{abstract}

We extend the Schanuel integration framework, originally introduced for finite unions of convex sets, to arbitrary semi-algebraic sets. 
We prove that the resulting Schanuel integral of indicator functions is independent of the choice of ordered linear bases and therefore defines a well-defined Euler characteristic in the semi-algebraic category.

We further show that this Schanuel--Euler characteristic coincides with the classical Euler characteristic defined via Borel--Moore homology and cylindrical algebraic decomposition. 
The recursive fiberwise structure of Schanuel integration provides an elementary and geometric interpretation of Euler characteristic and yields simplified proofs of several classical properties, including invariance under semi-algebraic isomorphisms.

\end{abstract}

\medskip

\noindent\textbf{Mathematics Subject Classification:}
14P10.

\medskip

\noindent\textbf{Keywords:}
Semi-algebraic sets, Euler characteristic, Schanuel integration,
cylindrical algebraic decomposition.

\section{Introduction}

Semi-algebraic sets play a central role in real algebraic geometry, tame topology and related areas of mathematics. They also arise naturally in optimization, computational geometry, data analysis and mathematical modeling.

Among the topological invariants associated with semi-algebraic sets, the Euler characteristic occupies a distinguished position because of its additivity, geometric invariance and compatibility with decomposition methods.

One of the fundamental results in tame geometry, due to van den Dries, states that two semi-algebraic sets are semi-algebraically isomorphic if and only if they have the same dimension and Euler characteristic~\cite{van1998tame}. 
This demonstrates that the Euler characteristic is not merely a topological invariant, but also a fundamental invariant in the semi-algebraic category.

There are two classical approaches to defining the Euler characteristic of semi-algebraic sets.

The first approach is homological. For a locally compact semi-algebraic set $A$, one defines
\[
\chi(A)=\sum_i(-1)^i\dim_{\mathbb Z/2}H_i^{\mathrm{BM}}(A;\mathbb Z/2),
\]
where $H_i^{\mathrm{BM}}(A;\mathbb Z/2)$ denotes the Borel--Moore homology of $A$ with coefficients in $\mathbb Z/2$~\cite{coste2005real}. This definition extends to arbitrary semi-algebraic sets by means of compactifications and open embeddings.

The second approach is decomposition-theoretic and is based on cylindrical algebraic decomposition (CAD). Every semi-algebraic set admits a decomposition into finitely many cells, each semi-algebraically homeomorphic to an open cube $(0,1)^k$, and one defines
\[
\chi(A)=\sum_i(-1)^{\dim C_i}.
\]
Van den Dries proved that this definition is independent of the chosen cell decomposition~\cite{van1998tame}.

Although these two approaches are well established, they rely respectively on homological machinery and global cell decompositions. It is therefore natural to ask whether the Euler characteristic admits a more elementary and recursive construction based directly on the geometry of fibers under projections.

The purpose of this paper is to answer this question by introducing an alternative approach based on recursive fiberwise integration.

Roughly speaking, let $A\subset\R^n$ be a semi-algebraic set and let $\mathbf 1_A$ denote its indicator function. Given an ordered basis $(v_1,\dots,v_n)$ of $\R^n$, we integrate $\mathbf 1_A$ recursively along the successive projections determined by the basis. For one-dimensional fibers, the integration assigns the value $-1$ to an open interval and $1$ to a singleton. More generally, if the fibers admit a finite semi-algebraic partition on which this construction is well defined, the integral is extended by additivity. Iterating this procedure along the ordered basis produces the Schanuel integral of $\mathbf 1_A$.

This viewpoint originates from valuation theory and the work of
Hadwiger~\cite{hadwiger1955eulers},
Groemer~\cite{groemer1972eulersche},
and Schanuel~\cite{schanuel2006length,schanuel2006negative}.
In particular, Chen introduced a Schanuel-type integration framework for finite unions of convex sets, where the Schanuel integral can be regarded as an iterated integration process of indicator functions~\cite{chen1993euler}.

Compared with the homological and decomposition-theoretic approaches, Schanuel integration has a particularly elementary and geometric character. 
Its recursive nature makes computations inductive on dimension and provides a direct interpretation of the Euler characteristic through the topology of fibers.

The principal difficulty is that the recursive integration process appears to depend on the chosen ordered basis. Our main result shows that this dependence is only apparent.

\begin{theorem}
Let $A\subset\R^n$ be a semi-algebraic set. Then the Schanuel integral of $\mathbf 1_A$ is independent of the choice of ordered bases of $\R^n$.
\end{theorem}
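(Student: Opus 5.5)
Our strategy is to show that, for any ordered basis $(v_1,\dots,v_n)$, the Schanuel integral of $\mathbf 1_A$ equals the cell-decomposition Euler characteristic $\sum_i(-1)^{\dim C_i}$ of a suitable decomposition of $A$. Van den Dries' independence result then makes the value basis-free. We first reduce to the standard basis. A linear change of coordinates $T$ sending $(v_1,\dots,v_n)$ to $(e_1,\dots,e_n)$ maps semi-algebraic sets to semi-algebraic sets. By construction, the Schanuel integral of $\mathbf 1_A$ with respect to $(v_i)$ equals the Schanuel integral of $\mathbf 1_{T(A)}$ with respect to $(e_i)$. Since van den Dries' $\chi$ is invariant under semi-algebraic homeomorphisms (cells go to cells of the same dimension), it suffices to prove
\[
\int \mathbf 1_B \,d\chi_{\mathrm{Sch}} = \chi(B)
\]
for every semi-algebraic $B\subset\R^n$, integrating along the standard basis.

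We argue by induction on $n$. For $n=1$, a semi-algebraic subset of $\R$ is a finite disjoint union of points and open intervals. By definition the integral assigns $1$ to each point and $-1$ to each interval, so it agrees with $\chi$. The definition of the integral for such finite unions uses additivity, and well-definedness is the statement that refining a partition does not change the count: splitting an interval at one point replaces $-1$ by $-1+1-1=-1$.

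For the inductive step, take a cylindrical algebraic decomposition of $\R^n$ adapted to $B$ and compatible with the projection $\pi:\R^n\to\R^{n-1}$ onto the first $n-1$ coordinates (equivalently, the last fiber direction). Over each cell $D$ of the base decomposition, the part of $B$ above $D$ is a finite union of graphs of continuous functions and open bands between consecutive graphs. Hence the fiber integral $x\mapsto \int \mathbf 1_{B_x}$ is constant on $D$, equal to $\#\text{graphs}-\#\text{bands}$. So the fiber function is a finite $\mathbb Z$-combination $\sum_D c_D \mathbf 1_D$ of indicators of semi-algebraic cells in $\R^{n-1}$. By additivity and the inductive hypothesis, its integral is $\sum_D c_D\chi(D)=\sum_D c_D(-1)^{\dim D}$. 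Each graph over $D$ is a cell of dimension $\dim D$, and each band is a cell of dimension $\dim D+1$. Therefore this sum equals $\sum_C(-1)^{\dim C}$ over the cells $C\subset B$, which is $\chi(B)$.

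The main obstacle is making sure the recursive integral is itself well defined before comparing with $\chi$. Concretely, the value must not depend on which finite semi-algebraic partition of the fibers, or of the base, is used at each stage. We would handle this inside the same induction by proving two things simultaneously. First, the integral on $\R^{k}$ is a well-defined additive functional on the ring generated by semi-algebraic indicators. Second, it agrees with $\chi$ on cells. Well-definedness then follows because any two partitions have a common CAD refinement, and $\chi$ (hence the integral) is additive and refinement-invariant. We expect most of the care to go into checking that the fiber functions are constructible, i.e. that the fiberwise counts are semi-algebraic functions. This comes from Hardt triviality or directly from the CAD structure.
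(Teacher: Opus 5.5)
Your proof is correct, but it runs in the opposite direction from the paper's. You identify the standard-basis Schanuel integral with van den Dries' cell count $E(B)=\sum_{C}(-1)^{\dim C}$, taken over the cells of a decomposition of $B$. This is the computation $\int_\eps\mathbf 1_C=(-1)^{\dim C}$ for CAD cells, already in the proof of Proposition~\ref{prop:semialgebraic-indicator-schanuel-integrable}, plus linearity. You then import two external theorems: independence of $E$ from the decomposition, and invariance of $E$ under semi-algebraic bijections, applied to $T_v^{-1}$.

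The paper instead proves basis independence directly. Via Chen's permutation reduction (Proposition~\ref{prop:permutation-reduction}) it reduces to adjacent transpositions, hence to sets in $\R^2$. It then enlarges the defining family by $\partial/\partial y$- and $\partial/\partial x$-derivatives, so every root function is constant or strictly monotone on each interval cell (Remark~\ref{rem:monotone-root-functions}). Finally it checks cell by cell that both orders of integration give $(-1)^{\dim K}$.

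Your route is shorter, needs neither the permutation reduction (which the paper only cites) nor the monotonicity analysis, and gives agreement with the classical Euler characteristic immediately. The price is that the imported invariance carries all the difficulty. Once the standard-basis integral equals $E(B)$, basis independence is literally equivalent to $E(T(B))=E(B)$ for linear isomorphisms $T$. So your argument cannot serve the paper's aim of building $\chi$ from Schanuel integration alone. In your setup, deducing Proposition~\ref{prop:semialgebraic-isomorphism-preserves-euler} from the theorem would also be circular.

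Two small corrections:
\begin{itemize}
\item Your parenthetical ``cells go to cells of the same dimension'' is not a valid reason for that invariance. Already the coordinate swap sends the band cell $\{-1<x<1,\ x^2<y<x^2+\tfrac1{10}\}$ to a set whose vertical fiber over $u=\tfrac12$ is two intervals, so it is not a cylindrical cell. You must cite the invariance theorem itself.
\item Your ``main obstacle'' is moot. The one-variable Schanuel integral is defined by an explicit pointwise formula, so no dependence on partitions can arise. All that must be checked is that every intermediate function is a finite $\mathbb Z$-combination of indicators of cells, which your induction already gives.
\end{itemize}
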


Consequently, Schanuel integration induces a well-defined Euler characteristic on semi-algebraic sets, which we call the \emph{Schanuel--Euler characteristic}.

We further prove that the Schanuel--Euler characteristic coincides with the classical Euler characteristic defined via Borel--Moore homology and cylindrical algebraic decomposition.
Moreover, once the well-definedness of the Schanuel integral has been established, several fundamental properties of the Euler characteristic admit short and transparent proofs within this recursive framework, illustrating another advantage of the Schanuel integration approach.
Hence Schanuel integration provides an alternative construction of the Euler characteristic together with a recursive and geometrically transparent viewpoint.

The paper is organized as follows.
In Section~2, we review Schanuel integration and its basic properties.
In Section~3, we recall cylindrical algebraic decompositions adapted to finite
families of polynomials.
In Section~4, we prove that the Schanuel--Euler characteristic of
semi-algebraic sets is well defined.
Finally, in Section~5, we establish its fundamental invariance properties.

\section{Schanuel Integration}

The following definitions are based on Chen's work~\cite{chen1993euler}.

\begin{definition}
A function $f:\R\to\R$ is said to be Schanuel integrable if:

\begin{enumerate}[label=(\roman*)]
\item $f$ is discontinuous at only finitely many points;
\item $f$ admits left and right limits at every point of $\R$;
\item the limits $f(+\infty)$ and $f(-\infty)$ exist.
\end{enumerate}

\noindent The Schanuel integral of $f$ is defined by
\[
\int_\eps f
=
\sum_{x\in\R}
\left(
f(x)
-\frac12 f(x-)
-\frac12 f(x+)
\right)
-\frac12 f(+\infty)
-\frac12 f(-\infty).
\]
\end{definition}

For example,
\[
\int_\eps \mathbf 1_{\{x\}} = 1,
\qquad
\int_\eps \mathbf 1_{(a,b)} = -1.
\]

\noindent We now define iterated Schanuel integration on $\R^n$.

\begin{definition}
Let $f:\R^n\to\R$.
We say that $f$ is Schanuel integrable with respect to the ordered basis $(e_n,\dots,e_1)$ if the recursive sequence of functions
\[
I_n^f=f,
\]
and
\[
I_{k-1}^f(a_1,\dots,a_{k-1})
=
\int_\eps
I_k^f(a_1,\dots,a_{k-1},x_k)\,dx_k
\]
is well-defined for all $1\le k\le n$.
\end{definition}

\noindent Informally,
\[
I_k^f(a_1,\dots,a_k)
=
\int_\eps\cdots\int_\eps
f(a_1,\dots,a_k,x_{k+1},\dots,x_n)
\,dx_n\cdots dx_{k+1}.
\]

\noindent One of the basic property of the Schanuel integral is linearity, which indicates the additivity of the Schanuel-Euler characteristic.

\begin{proposition}
If $f,g:\R^n\to\R$ are Schanuel integrable, then so are $f+g$ and $\lambda f$ for $\lambda\in\R$, and
\[
\int_\eps (f+g)
=
\int_\eps f
+
\int_\eps g,
\]
\[
\int_\eps \lambda f
=
\lambda \int_\eps f.
\]
\end{proposition}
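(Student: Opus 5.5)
The plan is to prove the one-dimensional case first, directly from the definition, and then induct on $n$ along the recursive sequence $I_k$.

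\textbf{The case $n=1$.} Let $f,g:\R\to\R$ be Schanuel integrable. We check conditions (i)--(iii) for $f+g$ and $\lambda f$.
\begin{enumerate}[label=(\roman*)]
\item The set of discontinuities of $f+g$ lies in the union of the discontinuity sets of $f$ and $g$, which is finite. For $\lambda f$ it lies in that of $f$.
\item One-sided limits are additive and homogeneous wherever they exist. So $(f+g)(x\pm)=f(x\pm)+g(x\pm)$ and $(\lambda f)(x\pm)=\lambda f(x\pm)$ at every $x$.
\item The same identities hold at $\pm\infty$.
\end{enumerate}
For the integral, observe that the summand $f(x)-\tfrac12 f(x-)-\tfrac12 f(x+)$ vanishes wherever $f$ is continuous. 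Hence the sum in the definition is a finite sum over the discontinuity set $D_f$. We may enlarge the index set to any finite set $S\supseteq D_f$ without changing the value, since the extra terms are zero. Take $S=D_f\cup D_g$. Then the expression defining $\int_\eps(f+g)$ splits termwise into the expressions for $\int_\eps f$ and $\int_\eps g$, because every term is linear in the values $f(x),f(x\pm),f(\pm\infty)$. The same computation with the single set $D_f$ gives $\int_\eps\lambda f=\lambda\int_\eps f$.

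\textbf{Induction on $n$.} We show by downward induction on $k$ that $I_k^{f+g}$ is well-defined and satisfies
\[
I_k^{f+g}=I_k^f+I_k^g,
\qquad
I_k^{\lambda f}=\lambda I_k^f.
\]
For $k=n$ this holds by definition. Suppose it holds for some $k$, and fix $(a_1,\dots,a_{k-1})$. The one-variable functions $x_k\mapsto I_k^f(a_1,\dots,a_{k-1},x_k)$ and $x_k\mapsto I_k^g(a_1,\dots,a_{k-1},x_k)$ are Schanuel integrable, because $f$ and $g$ are integrable with respect to the basis. By the inductive hypothesis, the corresponding function for $f+g$ is their sum. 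The case $n=1$ then shows that this sum is Schanuel integrable and that
\[
I_{k-1}^{f+g}(a_1,\dots,a_{k-1})=I_{k-1}^f(a_1,\dots,a_{k-1})+I_{k-1}^g(a_1,\dots,a_{k-1}).
\]
The scalar case is identical. At $k=0$ this is exactly the claim.

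\textbf{The main obstacle.} The only delicate point is bookkeeping. The definition sums over all $x\in\R$, so we must make sure that sum is genuinely finite and that it may be re-indexed over a common finite set $S$. Both facts follow from the observation above that the summand vanishes at points of continuity.
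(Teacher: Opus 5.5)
Your proof is correct. The one-dimensional case works as you say, by summing over the common finite set $D_f\cup D_g\supseteq D_{f+g}$, and downward induction through the $I_k$ then gives the general case. The paper states this proposition without proof, treating it as routine, and your argument is exactly that verification. The same conclusion for an arbitrary ordered basis $v$ follows immediately from $(f+g)\circ T_v=f\circ T_v+g\circ T_v$ and $(\lambda f)\circ T_v=\lambda(f\circ T_v)$. This also covers the basis-independent notion of integrability that the paper introduces later.
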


Let $(v_n,\dots,v_1)$ be an ordered basis of $\R^n$ and let
\[
T_v:\R^n\to\R^n
\]
be the linear isomorphism satisfying
\[
T_v(e_i)=v_i.
\]

\begin{definition}
A function $f:\R^n\to\R$ is Schanuel integrable with respect to $(v_n,\dots,v_1)$ if
\[
f\circ T_v
\]
is Schanuel integrable with respect to the standard basis.

In this case we define
\[
\int_{\eps_{(v_n,\dots,v_1)}} f
:=
\int_\eps (f\circ T_v).
\]
\end{definition}

\begin{definition}
A function $f:\R^n\to\R$ is called Schanuel integrable if its Schanuel integral is independent of the choice of ordered linear basis.

The Schanuel--Euler characteristic of $f$ is defined by
\[
\chi(f)
:=
\int_\eps f.
\]
\end{definition}

In particular, if $A\sub\R^n$ is a subset and $f=\mathbf 1_A$, we define
\[
\chi(A):=\chi(\mathbf 1_A).
\]

It is straightforward to prove by induction that the Schanuel integral is well defined for relatively open convex sets and convex bodies. In particular,
\[
\int_\varepsilon \mathbf 1_U=(-1)^{\dim U},
\qquad
\int_\varepsilon \mathbf 1_K=1.
\]

Chen established the well-definedness of the Schanuel--Euler characteristic for finite unions of convex sets ~\cite{chen1993euler}. 
The present paper extends this theory from convex geometry to the semi-algebraic category. Our main theorem proves that the Schanuel--Euler characteristic is well defined for arbitrary semi-algebraic sets.

\section{Semi-algebraic Sets and CAD}

In this section we recall the form of cylindrical algebraic decomposition
(CAD) that will be used later, and explain why it implies the
Schanuel-integrability of indicator functions of semi-algebraic sets.  The
discussion follows the standard treatment in semi-algebraic geometry; see, for
example, Coste's notes~\cite{coste2002introduction}.

Let \(C \subseteq \mathbb{R}^{k}\) be a semi-algebraic set, and let
\[
  \xi_1 < \cdots < \xi_r : C \longrightarrow \mathbb{R}
\]
be continuous semi-algebraic functions.  These functions decompose the cylinder
\(C \times \mathbb{R}\) into two types of semi-algebraic pieces.  The
\emph{graph cells} are the sets
\[
  \Gamma(\xi_i)
  :=
  \{(x,t)\in C\times \mathbb{R} : t=\xi_i(x)\},
  \qquad 1\leq i\leq r,
\]
while the \emph{band cells} are the regions between consecutive graphs, together
with the two unbounded regions:
\[
  \{(x,t)\in C\times \mathbb{R}: t<\xi_1(x)\},
\]
\[
  \{(x,t)\in C\times \mathbb{R}: \xi_i(x)<t<\xi_{i+1}(x)\},
  \qquad 1\leq i<r,
\]
and
\[
  \{(x,t)\in C\times \mathbb{R}: \xi_r(x)<t\}.
\]
Each graph cell is semi-algebraically homeomorphic to \(C\), and each band cell
is semi-algebraically homeomorphic to \(C\times (0,1)\).

Let \(L\subseteq \mathbb{R}[X_1,\ldots,X_n]\) be a finite collection of
polynomials.  A subset \(C\subseteq \mathbb{R}^n\) is called
\emph{\(L\)-invariant} if, for every \(f\in L\), the sign of \(f\) is constant
on \(C\).  Equivalently, for every \(f\in L\), the map
\[
  x\longmapsto \operatorname{sgn}(f(x))
\]
is constant on \(C\), where the sign takes values in \(\{-1,0,1\}\).

Given a finite set
\[
  L=\{f_1,\ldots,f_r\}\subseteq \mathbb{R}[X_1,\ldots,X_n],
\]
CAD associates to \(L\) another finite set
\[
  \operatorname{PROJ}(L)\subseteq \mathbb{R}[X_1,\ldots,X_{n-1}],
\]
We will not need its explicit construction.  The
properties relevant for us are the following:
\begin{enumerate}

\item The sign conditions determined by \(\operatorname{PROJ}(L)\) give a
finite semi-algebraic decomposition of \(\mathbb{R}^{n-1}\).  For every
connected component \(C\) of such a sign-condition set, there exist continuous
semi-algebraic functions
\[
  \xi_{C,1}<\cdots<\xi_{C,r_C}:C\longrightarrow \mathbb{R}
\]
such that, for each \(x\in C\), the set
\[
  \{\xi_{C,1}(x),\ldots,\xi_{C,r_C}(x)\}
\]
is precisely the set of real roots, in the variable \(X_n\), of the polynomials
in \(L\) after specializing
\[
  (X_1,\ldots,X_{n-1})=x.
\]
\item the cylinder \(C\times \mathbb{R}\) decomposes into the graph and band
cells determined by the functions \(\xi_{C,i}\), and each such cell is
\(L\)-invariant.  
\item the projection construction is monotone: if
\(L\subseteq L_1\), then
\[
  \operatorname{PROJ}(L)\subseteq \operatorname{PROJ}(L_1).
\]
\end{enumerate}

Iterating this projection-and-lifting procedure yields the following form of
the cylindrical algebraic decomposition theorem.

\begin{theorem}[Cylindrical algebraic decomposition]
\label{thm:cad}
Let
\[
  L\subseteq \mathbb{R}[X_1,\ldots,X_n]
\]
be a finite set of polynomials.  Then, for each \(k=1,\ldots,n\), there is a
finite partition \(\mathcal{C}_k\) of \(\mathbb{R}^k\) into connected
semi-algebraic sets, called cells, such that the following hold.

\begin{enumerate}
  \item Every cell \(C\in \mathcal{C}_1\) is either a point or an open interval.

  \item For every \(1\leq k<n\) and every \(C\in \mathcal{C}_k\), there are
  finitely many continuous semi-algebraic functions
  \[
    \xi_{C,1}<\cdots<\xi_{C,r_C}:C\longrightarrow \mathbb{R}
  \]
  such that the graph and band cells determined by these functions are exactly
  the cells of \(\mathcal{C}_{k+1}\) lying over \(C\).

  \item Each cell is semi-algebraically homeomorphic to an open cube.

  \item If \(L^{(0)}:=L\) and
  \[
    L^{(j+1)}:=\operatorname{PROJ}(L^{(j)}),
  \]
  then every cell \(C\in \mathcal{C}_k\) is \(L^{(n-k)}\)-invariant.
\end{enumerate}
\end{theorem}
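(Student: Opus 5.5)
The plan is to construct the partitions $\mathcal C_k$ by induction downward along the projection sequence, starting from $\mathbb{R}^1$ and lifting to $\mathbb{R}^n$. First I form the finite families $L^{(0)}=L$, $L^{(j+1)}=\operatorname{PROJ}(L^{(j)})$ for $j=0,\dots,n-2$. Here $L^{(j)}\subseteq\mathbb{R}[X_1,\dots,X_{n-j}]$, so $L^{(n-1)}$ consists of univariate polynomials in $X_1$.

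\textbf{Base case $k=1$.} The finitely many real roots of the nonzero polynomials in $L^{(n-1)}$ split $\mathbb{R}$ into points and open intervals, some of them unbounded. I take these as $\mathcal C_1$. Each piece is a point or an interval, is connected, and is $L^{(n-1)}$-invariant, because a polynomial does not change sign between consecutive roots. This settles items 1, 3 and 4 at level one.

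\textbf{Inductive step.} Suppose $\mathcal C_k$ has been built, consisting of connected cells that are $L^{(n-k)}$-invariant and homeomorphic to open cubes. Apply property 1 of $\operatorname{PROJ}$ to the family $L^{(n-k-1)}$, whose projection is $L^{(n-k)}$.

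The key point is this. Each $C\in\mathcal C_k$ is connected and $L^{(n-k)}$-invariant. Hence $C$ lies in a single sign-condition set of $\operatorname{PROJ}(L^{(n-k-1)})$, and in fact in one connected component $D$ of that set. Restricting the functions $\xi_{D,i}$ to $C$ gives continuous semi-algebraic functions $\xi_{C,1}<\dots<\xi_{C,r_C}$ on $C$. These still enumerate, fibrewise, the real roots of the specialized polynomials.

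I define $\mathcal C_{k+1}$ to be the graph and band cells over all $C\in\mathcal C_k$. They partition $\mathbb{R}^{k+1}$, since the sets $C\times\mathbb{R}$ do. Item 2 holds by construction. For item 4, property 2 of $\operatorname{PROJ}$ says the cells over $D$ are $L^{(n-k-1)}$-invariant, and the cells over $C$ are subsets of these, so they inherit the invariance.

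\textbf{Connectedness and cube structure.} Each graph cell is homeomorphic to $C$, and each band cell is homeomorphic to $C\times(0,1)$, via $(x,t)\mapsto(x,\phi(x,t))$. For an unbounded band, $\phi$ is a semi-algebraic rescaling of the form $t\mapsto \xi(x)\pm e^{?}$; to stay semi-algebraic I use $t\mapsto (t-\xi_r(x))/(1+t-\xi_r(x))$ in place of an exponential. For a band between two graphs I use the affine normalization. With $C\cong(0,1)^d$, this yields cells homeomorphic to $(0,1)^d$ or $(0,1)^{d+1}$, which are in particular connected.

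\textbf{Main obstacle.} The delicate point is the one the paper treats as a black box: the roots must be continuous and their number constant over a whole connected component. This is where the delineability content of $\operatorname{PROJ}$ enters. I would rely entirely on the stated property 1 for it, and be careful to use connectedness of $C$, not just sign-invariance, so that $C$ sits inside a single component $D$.

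A smaller technical point is the degenerate case where a polynomial vanishes identically on a fibre. This must also be covered by property 1; if needed, I would discard polynomials that vanish identically on $C$ from the root count, since they are zero, and hence sign-invariant, on every cell over $C$.
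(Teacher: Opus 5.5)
Your proposal is correct and follows the same route as the paper. The paper gives no argument beyond asserting that iterating the projection-and-lifting step (properties~1 and~2 of $\operatorname{PROJ}$) yields the theorem, and your write-up fills in the details it leaves implicit, notably restricting the root functions from the component $D$ to the connected subcell $C\subseteq D$ so that invariance and the cube structure pass up the tower. The only cosmetic fixes are to delete the stray placeholder ``$e^{?}$'' and to treat the case $r_C=0$, where a map such as $t\mapsto \tfrac12\bigl(1+t/(1+|t|)\bigr)$ identifies $C\times\mathbb{R}$ with $C\times(0,1)$.
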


\section{Schanuel--Euler Characteristic of Semi-algebraic Sets}
We now apply CAD to semi-algebraic sets.  Let
\(A\subseteq \mathbb{R}^n\) be semi-algebraic.  Choose a finite family of
polynomials \(L\subseteq \mathbb{R}[X_1,\ldots,X_n]\) such that \(A\) is
described by a Boolean combination of sign conditions on elements of \(L\).
By Theorem~\ref{thm:cad}, there is a CAD adapted to \(L\).  Since every cell is
\(L\)-invariant, the truth value of the defining Boolean formula for \(A\) is
constant on each cell.  Hence \(A\) is a finite union of cells in this CAD.

This observation is the bridge between CAD and the Schanuel--Euler
characteristic.

\begin{proposition}
\label{prop:semialgebraic-indicator-schanuel-integrable}
Let \(A\subseteq \mathbb{R}^n\) be a semi-algebraic set.  Then, with respect to
any ordered linear basis \((v_1,\ldots,v_n)\) of \(\mathbb{R}^n\), the
indicator function \(\mathbf{1}_A\) is Schanuel-integrable.
\end{proposition}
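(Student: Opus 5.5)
First we reduce to the standard basis. If \(T_v\) is the linear isomorphism with \(T_v(e_i)=v_i\), then \(\mathbf 1_A\circ T_v=\mathbf 1_{T_v^{-1}(A)}\). The set \(T_v^{-1}(A)\) is semi-algebraic, because it is defined by the polynomials of \(L\) composed with a linear change of variables. So it suffices to show that \(\mathbf 1_B\) is Schanuel integrable with respect to the standard basis for every semi-algebraic \(B\subseteq\R^n\). By the observation preceding the proposition, \(B\) is a finite disjoint union of cells of a CAD \(\mathcal C_1,\dots,\mathcal C_n\) adapted to a suitable \(L\). Since \(\mathbf 1_B=\sum\mathbf 1_C\) over these cells, the linearity proposition of Section~2 reduces the claim to a single cell \(C\in\mathcal C_n\).

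The key step is to prove, by downward induction on \(k\), the following statement. For every cell \(C\in\mathcal C_n\), the function \(I_k^{\mathbf 1_C}\) is well defined, and it equals \(c\cdot\mathbf 1_{D}\) for some cell \(D\in\mathcal C_k\) and some \(c\in\{\pm1\}\). Here \(D\) is the image of \(C\) under projection to \(\R^k\). For the inductive step we write \(C\) as a graph or band cell over \(D'\in\mathcal C_{k}\) (the projection to \(\R^{k}\)), which is the cylindrical structure of Theorem~\ref{thm:cad}(2) iterated down from \(\R^n\). More precisely, one shows the statement for cells \(E\in\mathcal C_{k+1}\): \(E\) is either \(\Gamma(\xi_{D,i})\) or a band over \(D\in\mathcal C_k\). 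Fix \(a\in\R^k\). If \(a\notin D\), the fiber of \(\mathbf 1_E\) over \(a\) is identically zero, so its integral is \(0\). If \(a\in D\), the fiber is \(\mathbf 1_{\{\xi_{D,i}(a)\}}\), a point, with integral \(1\), or the indicator of an open interval (possibly unbounded, e.g. \((-\infty,\xi_{D,1}(a))\) or all of \(\R\) when \(r_D=0\)), with integral \(-1\). Each of these functions of one variable has finitely many discontinuities, one-sided limits everywhere, and limits at \(\pm\infty\), so it is Schanuel integrable. We should check the unbounded cases directly from the formula. For example, for \((-\infty,b)\) we get \(-\tfrac12\) from the point \(b\) and \(-\tfrac12\) from \(f(-\infty)=1\), which gives \(-1\). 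For \(\R\) the integral is \(-\tfrac12-\tfrac12=-1\). Hence the fiberwise integral of \(\mathbf 1_E\) equals \(\pm\mathbf 1_D\), with the sign depending only on the type of cell.

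To run the full recursion for \(C\in\mathcal C_n\), we apply this one-step computation repeatedly. We have \(I_{n-1}^{\mathbf 1_C}=\pm\mathbf 1_{C_{n-1}}\), where \(C_{n-1}\in\mathcal C_{n-1}\) is the cell below \(C\). Then \(I_{n-2}^{\mathbf 1_C}=\pm\int_\eps\mathbf 1_{C_{n-1}}\), computed fiberwise in the same way, and so on down to \(I_0\), which is a sign \(\pm1\). The same argument also yields \(\int_\eps\mathbf 1_C=(-1)^{\dim C}\) as a by-product. Linearity in the fiber variable (the one-variable case of the linearity proposition) handles the scalar \(\pm1\) at each step.

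The main obstacle is bookkeeping rather than depth. We must make sure that the cells over which we recurse really belong to the CAD at every level, that is, that the projection of a cell of \(\mathcal C_{k+1}\) is exactly a cell of \(\mathcal C_k\). We must also confirm that the degenerate cases satisfy Definition~2.1, namely empty root sets and unbounded bands. I would first verify Theorem~\ref{thm:cad}(2), which says the cells over \(C\) are exactly the graph and band cells. This gives both that \(\mathcal C_{k+1}\) is cylindrical over \(\mathcal C_k\) and that each fiber is a single point or interval, which is all the induction uses.
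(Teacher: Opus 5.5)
Your proposal is correct and follows essentially the same route as the paper: reduce to the standard basis via \(T_v^{-1}(A)\), write \(\mathbf 1_A\) as a finite sum of indicators of CAD cells, and show by induction that integrating a graph or band cell in the last coordinate yields \(\pm\mathbf 1\) of the cell below, which also gives \(\int_\eps \mathbf 1_C=(-1)^{\dim C}\). Your explicit check of the unbounded bands and of the case with no root functions is something the paper leaves implicit, but it does not change the argument.
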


\begin{proof}
Fix an ordered linear basis \((v_1,\ldots,v_n)\) of \(\mathbb{R}^n\), and let
\[
  T:\mathbb{R}^n\longrightarrow \mathbb{R}^n,
  \qquad
  (x_1,\ldots,x_n)\longmapsto \sum_{i=1}^n x_i v_i .
\]
To prove Schanuel-integrability with respect to this basis, it is enough to
prove the corresponding statement for \(T^{-1}(A)\) in the standard coordinate
basis.  Since linear isomorphisms preserve semi-algebraic sets, \(T^{-1}(A)\)
is semi-algebraic.  Thus we may assume that the chosen basis is the standard
one.

Choose a finite family of polynomials \(L\) such that \(A\) is defined by a
Boolean combination of sign conditions on elements of \(L\), and take a CAD
adapted to \(L\).  Since each cell is \(L\)-invariant, the truth value of the
defining Boolean formula for \(A\) is constant on every cell.  Hence \(A\) is a
finite disjoint union of cells,
\[
  A = C_1\sqcup \cdots \sqcup C_m .
\]
Therefore
\[
  \mathbf{1}_A = \sum_{i=1}^m \mathbf{1}_{C_i}.
\]

It remains to note that the indicator function of every CAD cell is
Schanuel-integrable.  We prove this by induction on the ambient dimension.
For \(n=1\), every cell is either a point or an open interval, whose Schanuel
integrals are respectively \(1\) and \(-1\).

Assume the claim is known in dimension \(n-1\), and let
\(C\subseteq \mathbb{R}^n\) be a cell lying over a cell
\(D=\pi_n(C)\subseteq \mathbb{R}^{n-1}\).  If \(C\) is a graph cell over \(D\),
then integration in the last coordinate gives
\[
  \int_{\varepsilon} \mathbf{1}_C\,dx_n
  =
  \mathbf{1}_D .
\]
If \(C\) is a band cell over \(D\), then
\[
  \int_{\varepsilon} \mathbf{1}_C\,dx_n
  =
  -\mathbf{1}_D .
\]
By the induction hypothesis, \(\mathbf{1}_D\) is Schanuel-integrable, and hence
so is \(\mathbf{1}_C\).  Moreover the same induction gives
\[
  \int_{\varepsilon} \mathbf{1}_C
  =
  (-1)^{\dim C}.
\]
Thus every \(\mathbf{1}_{C_i}\) is Schanuel-integrable.  By finite additivity,
\(\mathbf{1}_A\) is Schanuel-integrable.
\end{proof}

We next prove that the Schanuel--Euler characteristic obtained in this way is
independent of the chosen ordered linear basis.  We first record a small
regularity fact for root functions.

\begin{lemma}
\label{lem:local-extension-root-function}
Let \(C\subseteq \mathbb{R}^{n-1}\) be connected and locally path-connected,
and let
\[
  \xi:C\longrightarrow \mathbb{R}
\]
be a continuous function such that \(\xi(x)\) is a simple root of a polynomial
\(g\in \mathbb{R}[X_1,\ldots,X_n]\) for every \(x\in C\).  Then for every
\(x_0\in C\), there is an open neighbourhood \(W\subseteq\mathbb{R}^{n-1}\) of
\(x_0\) and a smooth function
\[
  \xi':W\longrightarrow \mathbb{R}
\]
such that
\[
  \xi'|_{W\cap C}=\xi|_{W\cap C}.
\]
Moreover, for \(1\leq k\leq n-1\),
\[
  \frac{\partial \xi'}{\partial X_k}
  =
  -
  \left(
    \frac{\partial g}{\partial X_n}
  \right)^{-1}
  \frac{\partial g}{\partial X_k},
\]
where the right-hand side is evaluated at \((x,\xi'(x))\).
\end{lemma}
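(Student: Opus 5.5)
The plan is to apply the real-analytic (or smooth) implicit function theorem at the point \((x_0,\xi(x_0))\), and then use continuity of \(\xi\) together with local connectedness of \(C\) to show that \(\xi\) agrees with the implicit function near \(x_0\).

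Fix \(x_0\in C\) and set \(t_0=\xi(x_0)\). Since \(t_0\) is a simple root of \(t\mapsto g(x_0,t)\), we have \(g(x_0,t_0)=0\) and \(\partial g/\partial X_n(x_0,t_0)\neq 0\). Because \(g\) is a polynomial, it is smooth, so the implicit function theorem applies. It produces an open neighbourhood \(W_0\) of \(x_0\), an interval \(J=(t_0-\delta,t_0+\delta)\), and a smooth \(\xi':W_0\to J\) with the following property: for \((x,t)\in W_0\times J\), we have \(g(x,t)=0\) if and only if \(t=\xi'(x)\). Shrinking \(W_0\) if necessary, we may also assume that \(\partial g/\partial X_n\) does not vanish on the graph of \(\xi'\). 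Differentiating the identity \(g(x,\xi'(x))=0\) with the chain rule then gives the stated formula for \(\partial\xi'/\partial X_k\) at once.

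It remains to check that \(\xi'=\xi\) on \(W\cap C\) for a suitable open \(W\subseteq W_0\). By continuity of \(\xi\), there is an open \(W\subseteq W_0\) containing \(x_0\) with \(\xi(W\cap C)\subseteq J\). For any \(x\in W\cap C\), the number \(\xi(x)\) is a root of \(g(x,\cdot)\) lying in \(J\). By the uniqueness part of the implicit function theorem, \(\xi(x)=\xi'(x)\).

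The main point needing care is exactly this uniqueness step. A priori \(\xi(x)\) could be a different root of \(g(x,\cdot)\) that jumps across branches. Continuity of \(\xi\) confines \(\xi(x)\) to the window \(J\), and inside that window the root is unique, which rules this out. So connectedness and local path-connectedness of \(C\) are not actually needed for this local statement. If one prefers an argument that uses them, take \(W\) with \(W\cap C\) path-connected and argue that the set where \(\xi=\xi'\) is open and closed in \(W\cap C\). The simple-root hypothesis is what guarantees \(\partial g/\partial X_n\neq 0\) along \(\xi\), so the derivative formula is well defined at every point of \(W\cap C\).
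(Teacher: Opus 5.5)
Your proof is correct, and it takes a more direct route than the paper's.

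\textbf{The paper's route.} The paper applies the implicit function theorem at \((x_0,\xi(x_0))\). It then shrinks \(W\) so that \(W\cap C\) is connected; this is where local path-connectedness is used. Finally it shows that the coincidence set \(\{x\in W\cap C:\xi(x)=\xi'(x)\}\) is nonempty, closed and open. Openness comes from applying the implicit function theorem again at every point of the coincidence set, which is where simplicity of the root away from \(x_0\) is used.

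\textbf{Your route.} You apply the uniqueness clause of the implicit function theorem once, in a product box \(W_0\times J\) around \((x_0,\xi(x_0))\). Continuity of \(\xi\) at \(x_0\) then forces \(\xi(W\cap C)\subseteq J\), and uniqueness gives \(\xi=\xi'\) there. This is in effect the paper's openness step performed at \(x_0\) alone. Since the lemma only asks for some neighbourhood \(W\), nothing has to be propagated by connectedness.

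\textbf{What each buys.} Your argument shows that connectedness, local path-connectedness, and simplicity of the root at points other than \(x_0\) are superfluous for this local statement. You only need continuity of \(\xi\), the fact that \(\xi(x)\) is a root of \(g(x,\cdot)\), and simplicity at \(x_0\). The paper's open--closed argument gives a unique-continuation conclusion in exchange. The continuous root function agrees with the implicit branch on the whole connected set \(W\cap C\), where \(W\) is determined by the implicit function theorem and the topology of \(C\) rather than by the continuity of \(\xi\) at \(x_0\). That stronger form is not needed for the lemma as stated. Its later use in the remark on monotone root functions relies only on the local derivative formula.
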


\begin{proof}
For every \(x\in C\), we have
\[
  \frac{\partial g}{\partial X_n}(x,\xi(x))\neq 0.
\]
By the implicit function theorem, for every \(x_0\in C\), there exists an open
neighbourhood \(W\subseteq \mathbb{R}^{n-1}\) of \(x_0\) and a smooth function
\(\xi':W\to\mathbb{R}\) such that
\[
  \xi'(x_0)=\xi(x_0),
  \qquad
  g(x,\xi'(x))=0
\]
for all \(x\in W\).  Since \(C\) is locally path-connected, we may shrink \(W\)
so that \(W\cap C\) is connected.

Set
\[
  C_W:=\{x\in W\cap C:\xi(x)=\xi'(x)\}.
\]
This set is nonempty and closed in \(W\cap C\).  We claim that it is also open.
Let \(x\in C_W\), and put \(y=\xi(x)=\xi'(x)\).  Since \(y\) is a simple root
of \(g(x,T)\), the implicit function theorem gives a neighbourhood
\(U\subseteq \mathbb{R}^n\) of \((x,y)\) such that
\[
  g^{-1}(0)\cap U
  =
  \Gamma(\xi')\cap U .
\]
Let \(G_\xi:C\to\mathbb{R}^n\) be the graph map
\(G_\xi(z)=(z,\xi(z))\).  Then
\[
  V:=G_\xi^{-1}(U)
\]
is an open neighbourhood of \(x\) in \(C\), and on \(V\cap W\) we have
\(\xi=\xi'\).  Hence \(C_W\) is open in \(W\cap C\).  Since \(W\cap C\) is
connected and \(C_W\) is nonempty, open, and closed, we get
\(C_W=W\cap C\).  The derivative formula is the usual one obtained by
differentiating \(g(x,\xi'(x))=0\).
\end{proof}

The following remark is the key ingredient in proving the commutativity of two successive Schanuel integrations.
\begin{remark}
\label{rem:monotone-root-functions}
Let \(L\subseteq \mathbb{R}[x,y]\) be finite and closed under
\(\partial/\partial y\), and let \(C\subseteq\mathbb{R}\) be a connected cell in
a CAD adapted to \(L\).  Suppose \(\xi:C\to\mathbb{R}\) is a root function
arising from \(L\).  If \(\xi\) has multiplicity \(m\) as a root of
\(f\in L\), then \(\xi\) is a simple root of
\[
  g_\xi
  :=
  \frac{\partial^{m-1}f}{\partial y^{m-1}}.
\]
Thus
\[
  \frac{\partial g_\xi}{\partial y}(x,\xi(x))
  =
  \frac{\partial^m f}{\partial y^m}(x,\xi(x))
  \neq 0
\]
for all \(x\in C\).  If, in addition, the CAD is adapted to all
\(\partial g_\xi/\partial x\), then Lemma~\ref{lem:local-extension-root-function}
implies that the sign of \(\xi'\) is constant on each open interval cell.
Consequently every such root function is either constant or strictly monotone
on each open interval cell.
\end{remark}

We will use the following reduction due to Chen~\cite{chen1993euler}, which shows that basis independence can be tested by invariance under permutations of a fixed ordered basis.

\begin{proposition}[Permutation reduction]
\label{prop:permutation-reduction}

Let $f:\mathbb R^n\to\mathbb R$ be a function. Suppose that for every ordered basis
$v=(v_1,\ldots,v_n)$ of $\mathbb R^n$ and every permutation
$\sigma\in S_n$,
\[
\int_{v}f=\int_{v_\sigma}f,
\]
where
\[
v_\sigma=(v_{\sigma(1)},\ldots,v_{\sigma(n)}).
\]
Then $\int_{v}f$ is independent of the choice of the ordered basis $v$.

\end{proposition}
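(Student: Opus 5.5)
The plan is to show that the set of ordered bases $w$ with $\int_w f=\int_v f$ is closed under a family of moves that generates all changes of basis. Concretely, write any other ordered basis as $w=vM$ with $M\in GL_n(\R)$, i.e.\ $w_j=\sum_i M_{ij}v_i$. Since $GL_n(\R)$ is generated by permutation matrices, invertible diagonal matrices and elementary transvections $E_{kj}(c)$ ($v_k\mapsto v_k+cv_j$, $j\neq k$), it suffices to show that $\int_v f$ is unchanged under each of these three moves, for every basis $v$. Permutations are exactly the hypothesis. It remains to handle scalings and transvections.

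\textbf{Step 1 (geometric meaning of the iterated integral).} First I will unwind the definition of $\int_v f=\int_\eps (f\circ T_v)$. The innermost integration in $x_n$ integrates $f$ along the line $p+\R v_n$, where $p=\sum_{i<n}a_iv_i$. Inductively, $I_k(a_1,\dots,a_k)$ is a function of the affine subspace $p+\operatorname{span}(v_{k+1},\dots,v_n)$, where $p=\sum_{i\le k}a_iv_i$. It is computed by integrating along the directions $v_n,v_{n-1},\dots,v_{k+1}$ in turn, and on each line the integral is taken in the affine parameter $t\mapsto q+tv_j$.

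\textbf{Step 2 (invariance of the one-dimensional integral under affine reparametrization).} Next I will check directly from the formula for $\int_\eps$ that it is invariant under $t\mapsto \lambda t+c$ with $\lambda\neq0$. Translation and positive scaling only relabel the points $x$, preserving both the one-sided limits and the limits at $\pm\infty$. For $\lambda<0$, the roles of $f(x-)$ and $f(x+)$ are exchanged, as are $f(+\infty)$ and $f(-\infty)$, and the formula is symmetric in each pair. Integrability conditions (i)--(iii) are preserved as well. Consequently, scaling $v_k$ by $\lambda\neq0$ does not change $I_{k-1}$, and hence does not change the total integral.

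\textbf{Step 3 (transvections, the main point).} For $j<k$, consider replacing $v_k$ by $v_k'=v_k+cv_j$. The new level-$k$ function $I'_k$ agrees with $I_k$ as a function of the affine subspace $p+\operatorname{span}(v_{k+1},\dots,v_n)$, since those directions are unchanged. At level $k-1$ we integrate along the line through $p_0=\sum_{i<k}a_iv_i$ in the direction $v_k'$. The points of this line are
\[
p_0+tv_k' \;=\; \textstyle\sum_{i<k,\,i\neq j}a_iv_i+(a_j+ct)v_j+tv_k ,
\]
so this is the same line $p_0+\R v_k'$ for every $c$, and it meets the base span at the same point $p_0$.

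I expect the main obstacle to be exactly here. The lines $p_0+\R v_k'$ and $p_0+\R v_k$ have different directions, so a one-line comparison is not enough. What I must show is that the resulting level-$(k-1)$ value, viewed as a function of the affine subspace $p_0+\operatorname{span}(v_k',\dots,v_n)$, depends only on that subspace, which equals $p_0+\operatorname{span}(v_k,\dots,v_n)$ only when $c=0$.

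To avoid this issue, I will use the permutation hypothesis to move the modified vector into a favourable position. For a transvection $v_k\mapsto v_k+cv_j$, first permute the basis so that $v_k$ is in the last position and $v_j$ in the first. The iterated integral then integrates first along lines parallel to $v_k$, and last along $v_j$. Replacing $v_k$ by $v_k+cv_j$ now only changes the base point at which each innermost line is parametrized: it is the family of lines parallel to $v_k+cv_j$, parametrized by points of $\operatorname{span}(v_i:i\neq k)$. I will check by induction on the outer levels that this amounts to composing the outer functions $I_{n-1},\dots$ with the shear $(a_1,\dots,a_{n-1})\mapsto$ (the new base point). Step 2 applied to the $v_j$-coordinate, combined with the fact that after integrating out all intermediate directions only the $a_j$-dependence remains, should then absorb this shear. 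Finally, permute back using the hypothesis once more.

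\textbf{Step 4 (conclusion and integrability).} Throughout, each move should also transfer Schanuel integrability, so that both sides of each equality are defined; Step 2 already handles this for scalings. The three kinds of moves generate $GL_n(\R)$, so every ordered basis is reached from $v$ by finitely many moves. Each move preserves $\int f$, so $\int_v f$ is independent of $v$.
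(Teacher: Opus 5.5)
The paper does not prove this proposition; it quotes it from Chen. So your argument has to stand on its own. Your reduction to permutations, scalings and transvections is fine, and so are Steps 1, 2 and 4. The gap is in Step 3: you move the modified vector into the \emph{unfavourable} slot. Putting $v_k$ last (innermost) and $v_j$ first (outermost) is, after relabelling, exactly the case $j<k$ that you had just identified as the obstacle.

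Your claim that the transvection then ``only changes the base point'' of the innermost lines is false, because $p+\R(v_k+cv_j)$ and $p+\R v_k$ are different lines. Writing $u$ for your permuted basis and $g=f\circ T_u$, the new innermost integral is $\int_\eps g(a_1+ct,a_2,\dots,a_{n-1},t)\,dt$, which is not a reparametrization of $I_{n-1}$. For instance, take $n=2$, $v=(e_1,e_2)$, $w=(e_1,e_2+ce_1)$ with $c>0$, and $f=\mathbf 1_{\{0\}\times(0,1)}$. Then $I_1=-\mathbf 1_{\{0\}}$ but $I'_1=\mathbf 1_{(-c,0)}$. No composition of $I_1$ with a shear can produce a function taking the value $1$, so Step 2 never gets a chance to apply.

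The repair is to permute the other way round. Let $w$ be obtained from $v$ by $v_k\mapsto v_k+cv_j$. Choose $\sigma$ so that in $u=v_\sigma$ the vector $v_k$ sits in position $r$ and $v_j$ in position $s>r$, so that $v_j$ is integrated \emph{before} $v_k$. Then $w_\sigma=u'$ is the corresponding transvection of $u$.

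With $g=f\circ T_u$ you have $f\circ T_{u'}(x)=g(x_1,\dots,x_s+cx_r,\dots,x_n)$. The integrations in $x_n,\dots,x_{s+1}$ commute with this substitution, so $I^{u'}_m(a)=I^u_m(a_1,\dots,a_s+ca_r,\dots,a_m)$ for $m\ge s$. The next integration is in $x_s$ with $a_r$ held fixed, so translation invariance from Step 2 gives $I^{u'}_{s-1}=I^u_{s-1}$, with integrability transferred, and all lower levels then agree. Geometrically, $v_j=u_s$ lies in $\operatorname{span}(u_{r+1},\dots,u_n)$, so the affine subspace $p_0+\operatorname{span}(u_r,\dots,u_n)$ from your Step 1 is unchanged, which is exactly what your own analysis asked for.

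The hypothesis then gives $\int_w f=\int_{w_\sigma}f=\int_{u'}f=\int_u f=\int_{v_\sigma}f=\int_v f$. With this single correction your argument is complete.
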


\begin{theorem}
\label{thm:basis-independence-schanuel-euler}
Let \(A\subseteq\mathbb{R}^n\) be semi-algebraic.  Then
\[
  \int_{\varepsilon_{(v_n,\ldots,v_1)}} \mathbf{1}_A
\]
is independent of the ordered basis \((v_n,\ldots,v_1)\).
\end{theorem}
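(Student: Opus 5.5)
By Proposition~\ref{prop:permutation-reduction}, it suffices to show that for every ordered basis \(v\) and every permutation \(\sigma\in S_n\) the integrals along \(v\) and \(v_\sigma\) agree. Composing with \(T_v\) and using that \(T_v^{-1}(A)\) is again semi-algebraic (as in the proof of Proposition~\ref{prop:semialgebraic-indicator-schanuel-integrable}), we may assume \(v\) is the standard basis. Since \(S_n\) is generated by adjacent transpositions, we then only need the following: for each \(k\), swapping the order of integration in \(x_k\) and \(x_{k+1}\) does not change the integral.

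To reduce this to the plane, observe that the iterated integral first integrates out \(x_n,\dots,x_{k+2}\). Fix \((a_1,\dots,a_{k-1})\). The function \(I^{\mathbf 1_A}_{k+1}(a_1,\dots,a_{k-1},x_k,x_{k+1})\) is a finite integer combination of indicators of semi-algebraic subsets of \(\R^2\). To see this, take a CAD adapted to \(L\) and note that each cell contributes \(\pm\mathbf 1\) of its projection, by the computation in Proposition~\ref{prop:semialgebraic-indicator-schanuel-integrable}. Projections of cells are semi-algebraic, and the order of the outer coordinates is untouched by the swap. Linearity (the Proposition in Section~2) then reduces the claim to the following: for every semi-algebraic \(B\subseteq\R^2\),
\[
\int_\eps\!\int_\eps \mathbf 1_B\,dy\,dx=\int_\eps\!\int_\eps \mathbf 1_B\,dx\,dy .
\]

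For the planar case, choose \(L\subseteq\R[x,y]\) defining \(B\), and enlarge \(L\) so that it is closed under \(\partial/\partial y\) and contains the polynomials \(\partial g_\xi/\partial x\) of Remark~\ref{rem:monotone-root-functions}. By Remark~\ref{rem:monotone-root-functions}, every root function on an open interval cell is then constant or strictly monotone. Refine further by adding the \(x\)-coordinates of all points where two root-function closures meet or where a root function has a limit. After this refinement every cell of \(\mathcal C_2\) is one of three types: a point; a graph of a constant or strictly monotone continuous function over an open interval; or a band between two such graphs, possibly unbounded, possibly degenerating at the endpoints. By additivity, it suffices to check the swap for each such cell.

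With respect to the \(y\)-first order a cell has integral \((-1)^{\dim}\). For the \(x\)-first order, integrate \(\mathbf 1_C\) in \(x\) along horizontal lines:
\begin{itemize}
\item A strictly monotone graph over an interval meets each horizontal line in at most one point, so it gives \(\mathbf 1\) of an open interval, and the total is \(-1\).
\item A constant graph gives \(-\mathbf 1_{\{c\}}\), and the total is \(-1\).
\item A band's horizontal sections are finite unions of intervals whose number changes only at finitely many \(y\)-values; monotonicity of the boundaries makes each section a single interval, which can be open or half-open.
\end{itemize}
For bands, the key tool is the formula from the definition of \(\int_\eps\): a half-open interval has integral \(0\), an open one \(-1\), a closed one \(1\). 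Summing the contributions over the critical \(y\)-values should return \(+1\) in every configuration (bounded or unbounded in each direction, and each combination of monotonicities of the two boundaries).

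I expect this band case to be the main obstacle: a finite but fiddly case analysis of horizontal sections near the endpoints of the base interval and at the heights where the boundary graphs attain their limits. One way to shorten it, which I would try first, is to cut each band by horizontal lines through its corner heights. This expresses the band, up to additivity, as a combination of a few standard shapes, such as a triangle-like region under a monotone graph and a rectangle. Each standard shape has a horizontal section that is a single open interval on an open range, which can be checked directly.
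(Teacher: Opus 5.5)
Your reduction steps match the paper's: permutation reduction, adjacent transpositions, slicing to $\R^2$ with linearity, enlarging $L$ by derivatives so that root functions are constant or strictly monotone, then a cell-by-cell check. Your point and graph cases also agree with the paper's. The gap is the band case. It is the only case with real content, and you leave it as an expectation (``should return $+1$'') plus a plan.

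The plan also rests on a false picture. Let $C$ be an open interval and $\xi_1<\xi_2$ continuous on $C$ (allowing $\pm\infty$). The band $K=\{(x,y): x\in C,\ \xi_1(x)<y<\xi_2(x)\}$ is cut out by strict inequalities on the open set $C\times\R$. So $K$ is open in $\R^2$, its horizontal sections are open, and half-open sections never occur.

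The missing step, which is all the paper uses here, is short. For each height $c$ the section is $\{x\in C:\xi_1(x)<c\}\cap\{x\in C:\xi_2(x)>c\}$. Each of these two sets is open by continuity and an interval by monotonicity, so the section is empty or an open interval. Hence $\int_\eps\mathbf 1_K\,dx=-\mathbf 1_{\pi_y(K)}$. Since $K$ is connected and open, $\pi_y(K)$ is an open interval, so $\int_\eps\mathbf 1_K\,dx\,dy=(-1)(-1)=1=(-1)^{1+1}$. No analysis of corner heights, refinement at limit points, or cutting into standard shapes is needed.

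Two smaller points. First, your list of cell types omits bands over point cells (vertical open segments, rays or lines). These are the paper's first case; both orders give $-1$ because $\pi_y(K)$ is an open interval.

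Second, for monotonicity you should take $L$ closed under both $\partial/\partial x$ and $\partial/\partial y$, which is still a finite set. If you adjoin the polynomials $\partial g_\xi/\partial x$ only once, their own $y$-roots become section functions of the finer CAD and bound some of its cells. Remark~\ref{rem:monotone-root-functions} does not cover these new roots, so they need not be monotone. The paper's one-step enlargement $L_1$ is loose in the same way. With full closure, every section function $\xi$ is a simple root of some $g_\xi\in L$ with $\partial g_\xi/\partial x$ again in $L$, so the Remark applies to all of them.
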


\begin{proof}
By Proposition~\ref{prop:semialgebraic-indicator-schanuel-integrable},
the iterated Schanuel integral is defined for every ordered basis.

Since every ordered basis is obtained from the standard basis by a linear automorphism of $\mathbb R^n$, 
and linear automorphisms preserve semi-algebraic sets, it suffices to consider the standard basis. 
By Proposition~\ref{prop:permutation-reduction}, it is therefore enough to prove that the iterated Schanuel integral is invariant under permutations of the coordinate order.

Every permutation is a product of adjacent transpositions.  Hence it suffices
to prove that
\[
  \int_{\varepsilon} \mathbf{1}_A\,
  dx_n\cdots dx_{i+1}\,dx_i\cdots dx_1
  =
  \int_{\varepsilon} \mathbf{1}_A\,
  dx_n\cdots dx_i\,dx_{i+1}\cdots dx_1
\]
for every adjacent pair \(i,i+1\).  The integrations in the coordinates other
than \(x_i,x_{i+1}\) are the same on both sides and produce a finite
\(\mathbb{Z}\)-linear combination of indicator functions of semi-algebraic
sets.  By finite additivity, the problem reduces to the two-dimensional case:
for every semi-algebraic \(A\subseteq\mathbb{R}^2\), one must show
\[
  \int_{\varepsilon}\mathbf{1}_A\,dx\,dy
  =
  \int_{\varepsilon}\mathbf{1}_A\,dy\,dx .
\]

We prove this two-dimensional statement.  Choose a finite set
\(L\subseteq\mathbb{R}[x,y]\) such that \(A\) is defined by a Boolean
combination of sign conditions on elements of \(L\).  Enlarging \(L\) if
necessary, we may assume that \(L\) is closed under \(\partial/\partial y\).
Now further enlarge \(L\) to a finite set \(L_1\):
$$ L_1= L\cup \left\{\frac{\partial f}{\partial x}: 
\text{if }\frac{\partial f}{\partial x} \text{ is not constant for $f\in L$}\right\} $$
Take a CAD of \(\mathbb{R}^2\)
adapted to \(L_1\).  By monotonicity of the $\text{PROJ}$ construction, this CAD 
refines the CAD adapted to \(L\), and hence \(A\) is a finite disjoint union of
two-dimensional CAD cells for \(L_1\).

It is therefore enough to prove the equality for a single cell
\(K\subseteq\mathbb{R}^2\).  Let \(C=\pi_x(K)\subseteq\mathbb{R}\), where
\(\pi_x(x,y)=x\).  The cell \(K\) is either a graph cell or a band cell over
\(C\).  Let \(d\) be the fiber dimension of \(K\) over \(C\); thus \(d=0\) for a
graph cell and \(d=1\) for a band cell.  By the definition of the Schanuel
integral in the \(y\)-direction,
\[
  \int_{\varepsilon}\mathbf{1}_K\,dy\,dx
  =
  (-1)^d
  \int_{\varepsilon}\mathbf{1}_C\,dx
  =
  (-1)^{d+\dim C}.
\]

We now compute the integral in the opposite order.  There are three cases.

First, suppose \(C\) is a point.  Then \(\dim C=0\).  The projection of \(K\)
to the \(y\)-axis is either a point or an open interval according as \(K\) is a
graph cell or a band cell.  Hence
\[
  \int_{\varepsilon}\mathbf{1}_K\,dx\,dy
  =
  (-1)^d
  =
  \int_{\varepsilon}\mathbf{1}_K\,dy\,dx .
\]

Second, suppose \(C\) is an open interval and \(K\) is a band cell over \(C\).
Write
\[
  K=\{(x,y)\in C\times\mathbb{R}:\xi_1(x)<y<\xi_2(x)\},
\]
allowing \(\xi_1=-\infty\) or \(\xi_2=+\infty\) for unbounded bands.  By
Remark~\ref{rem:monotone-root-functions}, after passing to the refined CAD the
finite root functions are constant or strictly monotone on \(C\).  Therefore
every nonempty horizontal fiber of \(K\) is an open interval.  Since the
projection of \(K\) to the \(y\)-axis is also an interval, we get
\[
  \int_{\varepsilon}\mathbf{1}_K\,dx\,dy
  =
  -\int_{\varepsilon}\mathbf{1}_{\pi_y(K)}\,dy
  =
  (-1)(-1)
  =
  1.
\]
On the other hand,
\[
  \int_{\varepsilon}\mathbf{1}_K\,dy\,dx
  =
  (-1)^{1+1}
  =
  1.
\]

Third, suppose \(C\) is an open interval and \(K\) is the graph of a root
function \(\xi:C\to\mathbb{R}\).  Again, \(\xi\) is either constant or strictly
monotone on \(C\).  If \(\xi\) is strictly monotone, then every nonempty
horizontal fiber is a point and \(\pi_y(K)\) is an open interval.  Hence
\[
  \int_{\varepsilon}\mathbf{1}_K\,dx\,dy
  =
  \int_{\varepsilon}\mathbf{1}_{\pi_y(K)}\,dy
  =
  -1.
\]
If \(\xi\) is constant, then \(\pi_y(K)\) is a point and the horizontal fiber is
the open interval \(C\).  Hence
\[
  \int_{\varepsilon}\mathbf{1}_K\,dx\,dy
  =
  -\int_{\varepsilon}\mathbf{1}_{\pi_y(K)}\,dy
  =
  -1.
\]
In both subcases,
\[
  \int_{\varepsilon}\mathbf{1}_K\,dx\,dy
  =
  -1
  =
  (-1)^{0+1}
  =
  \int_{\varepsilon}\mathbf{1}_K\,dy\,dx .
\]

Thus the two orders of integration agree on every cell \(K\).  Since \(A\) is a
finite disjoint union of such cells and the Schanuel integral is finitely
additive,
\[
  \int_{\varepsilon}\mathbf{1}_A\,dx\,dy
  =
  \sum_K
  \int_{\varepsilon}\mathbf{1}_K\,dx\,dy
  =
  \sum_K
  \int_{\varepsilon}\mathbf{1}_K\,dy\,dx
  =
  \int_{\varepsilon}\mathbf{1}_A\,dy\,dx .
\]
This proves invariance under adjacent transpositions, hence under all
permutations of the coordinate order, and therefore the value of
\[
  \int_{\varepsilon_{(v_n,\ldots,v_1)}} \mathbf{1}_A
\]
is independent of the ordered basis.
\end{proof}

\section{Other Conclusions}
The preceding results allow us to define the Euler characteristic of a
semi-algebraic set by Schanuel integration.  Namely, if
\(A\subseteq \mathbb{R}^n\) is semi-algebraic, we set
\[
  \chi(A)
  :=
  \int_{\varepsilon} \mathbf{1}_A .
\]
By Theorem~\ref{thm:basis-independence-schanuel-euler}, this number is
independent of the ordered linear basis used to compute the iterated Schanuel
integral.  We now record several basic consequences.  These properties show
that this definition agrees with the usual Euler characteristic of semi-algebraic sets.

One advantage of the Schanuel integration framework is that several fundamental properties of the Euler characteristic admit particularly transparent proofs. 
In particular, the invariance under semi-algebraic isomorphisms follows directly from the well-definedness of the iterated Schanuel integral.
\begin{proposition}[Invariance under semi-algebraic isomorphisms]
\label{prop:semialgebraic-isomorphism-preserves-euler}
Let
\[
  \psi:A\xrightarrow{\cong}B
\]
be a semi-algebraic isomorphism, where
\(A\subseteq \mathbb{R}^n\) and \(B\subseteq \mathbb{R}^m\) are
semi-algebraic sets.  Then
\[
  \chi(A)=\chi(B).
\]
\end{proposition}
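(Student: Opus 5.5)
The standard route is to compare $A$ and $B$ through the graph of $\psi$. Set
\[
  \Gamma:=\{(x,\psi(x)) : x\in A\}\subseteq \mathbb{R}^n\times\mathbb{R}^m=\mathbb{R}^{n+m}.
\]
This set is semi-algebraic, since $\psi$ is a semi-algebraic map. By Theorem~\ref{thm:basis-independence-schanuel-euler}, $\chi(\Gamma)=\int_\varepsilon \mathbf 1_\Gamma$ does not depend on the ordered basis of $\mathbb{R}^{n+m}$. The plan is to compute this number in two coordinate orders and show that one gives $\chi(A)$ and the other gives $\chi(B)$.

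For the first computation, I order the basis so that the $m$ coordinates $y=(y_1,\dots,y_m)$ of $\mathbb{R}^m$ are the innermost variables, integrated first. For fixed $x\in\mathbb{R}^n$, the fiber $\Gamma_x=\{y:(x,y)\in\Gamma\}$ is the single point $\psi(x)$ if $x\in A$ and is empty otherwise. Iterated Schanuel integration of the indicator of a point gives $1$, one coordinate at a time: a singleton gives $1$, and the remaining coordinate functions are then indicators of points in lower dimension. So after integrating out $y$, the intermediate function $I_n^{\mathbf 1_\Gamma}(x)$ equals $\mathbf 1_A(x)$. The remaining integrations over $x$ give $\int_\varepsilon \mathbf 1_A=\chi(A)$. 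Strictly, this uses the recursive definition with the ordered basis $(e_{n+m},\dots,e_1)$, after relabelling coordinates by a permutation. This is allowed because Proposition~\ref{prop:semialgebraic-indicator-schanuel-integrable} guarantees that every intermediate function is well defined.

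For the second computation, I put the $x$-coordinates innermost instead. Because $\psi$ is a bijection, the fiber $\{x:(x,y)\in\Gamma\}$ is the single point $\psi^{-1}(y)$ if $y\in B$ and is empty otherwise. The same argument gives $\chi(\Gamma)=\chi(B)$. Basis independence then yields $\chi(A)=\chi(\Gamma)=\chi(B)$.

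The only delicate point is the claim that integrating a point fiber coordinate by coordinate yields $1$ pointwise, independent of the outer variables. This is a direct induction on the fiber dimension using $\int_\varepsilon \mathbf 1_{\{t\}}=1$ and $\int_\varepsilon 0=0$. I also need to check that this pointwise computation agrees with the recursive integrals, which are defined through the same pointwise recursion, so no further regularity argument is needed. Note that only bijectivity of $\psi$ and semi-algebraicity of its graph are used; continuity of $\psi$ and $\psi^{-1}$ plays no role. This matches the expectation that the result follows from the well-definedness theorem alone.
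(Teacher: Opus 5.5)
Your proposal is correct and follows essentially the same route as the paper. The paper shows $\chi(\Gamma_\psi)=\chi(A)$ and $\chi(\Gamma_{\psi^{-1}})=\chi(B)$ by integrating out the point fibers first, then identifies the two graphs by a block permutation of coordinates and applies basis independence, which is the same as your computing $\chi(\Gamma)$ in two coordinate orders; your remark that continuity of $\psi$ is never used applies equally to the paper's argument.
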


\begin{proof}
Consider the graph
\[
  \Gamma_\psi
  :=
  \{(x,\psi(x))\in \mathbb{R}^n\times \mathbb{R}^m:x\in A\}.
\]
Write
\[
  \psi(x)=(\psi_1(x),\ldots,\psi_m(x)).
\]
We compute the Euler characteristic of \(\Gamma_\psi\) using the ordered
coordinate basis
\[
  (e_{n+m},\ldots,e_{n+1},e_n,\ldots,e_1).
\]
For each \(x\in A\), the fiber of \(\Gamma_\psi\) over \(x\) in the last
\(m\) coordinates is a single point.  Therefore successive Schanuel integration
in the coordinates \(e_{n+m},\ldots,e_{n+1}\) gives
\[
  \int_{\varepsilon}
  \mathbf{1}_{\Gamma_\psi}\,
  de_{n+m}\cdots de_{n+1}
  =
  \mathbf{1}_A .
\]
Hence
\[
  \chi(\Gamma_\psi)
  =
  \int_{\varepsilon}
  \mathbf{1}_{\Gamma_\psi}\,
  de_{n+m}\cdots de_1
  =
  \int_{\varepsilon}
  \mathbf{1}_A\,
  de_n\cdots de_1
  =
  \chi(A).
\]

Applying the same argument to \(\psi^{-1}:B\to A\), we get
\[
  \chi(\Gamma_{\psi^{-1}})=\chi(B).
\]
The graphs \(\Gamma_\psi\) and \(\Gamma_{\psi^{-1}}\) differ only by a
permutation of the two coordinate blocks.  Thus, by the independence of the
ordered basis,
\[
  \chi(\Gamma_\psi)=\chi(\Gamma_{\psi^{-1}}).
\]
Consequently
\[
  \chi(A)=\chi(\Gamma_\psi)=\chi(\Gamma_{\psi^{-1}})=\chi(B).
\]
\end{proof}

\begin{corollary}
\label{cor:cell-euler-characteristic}
Let \(C\) be a \(k\)-cell in a CAD, equivalently a semi-algebraic cell
semi-algebraically isomorphic to an open cube \((0,1)^k\).  Then
\[
  \chi(C)=(-1)^k.
\]
\end{corollary}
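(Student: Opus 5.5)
The corollary follows by combining Proposition~\ref{prop:semialgebraic-isomorphism-preserves-euler} with an explicit computation for the model cube. Since the statement says "equivalently", I will treat both descriptions of $C$: a $k$-cell of a CAD, or a semi-algebraic set semi-algebraically isomorphic to $(0,1)^k$. By Theorem~\ref{thm:cad}(3) every CAD cell is of the second kind, so it suffices to prove $\chi(C)=(-1)^k$ whenever $C\cong(0,1)^k$ semi-algebraically.

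\textbf{Step 1: reduce to the cube.} Let $\psi:C\to(0,1)^k$ be a semi-algebraic isomorphism. Proposition~\ref{prop:semialgebraic-isomorphism-preserves-euler} gives $\chi(C)=\chi((0,1)^k)$, where the right-hand side is computed in $\mathbb{R}^k$. Here $\chi$ is well defined by Theorem~\ref{thm:basis-independence-schanuel-euler}, so we may use the standard basis.

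\textbf{Step 2: compute $\chi((0,1)^k)$ by induction on $k$.} For each $(a_1,\ldots,a_{k-1})$, the fiber of $(0,1)^k$ in the last coordinate is $(0,1)$ if $(a_1,\ldots,a_{k-1})\in(0,1)^{k-1}$ and empty otherwise. Since $\int_\varepsilon \mathbf 1_{(0,1)}=-1$, integrating in $x_k$ gives
\[
I_{k-1}=-\mathbf 1_{(0,1)^{k-1}}.
\]
By linearity of the Schanuel integral and the induction hypothesis,
\[
\chi((0,1)^k)=-\chi((0,1)^{k-1})=(-1)^k.
\]
The base case $k=0$ is a point, with $\chi=1$. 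One can also cite the paper's earlier remark that $\int_\varepsilon\mathbf 1_U=(-1)^{\dim U}$ for relatively open convex $U$.

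\textbf{Step 3: consistency with the CAD computation.} For a CAD cell, the proof of Proposition~\ref{prop:semialgebraic-indicator-schanuel-integrable} already gives $\int_\varepsilon\mathbf 1_C=(-1)^{\dim C}$ directly, with no appeal to the isomorphism. So the two descriptions of $C$ give the same value provided the dimension $k$ of a CAD cell matches the $k$ of the cube it is isomorphic to. The only point needing care is well-definedness of $k$: if $(0,1)^k\cong(0,1)^{k'}$, Step 1 forces $(-1)^k=(-1)^{k'}$, which is all the statement needs. In fact $k=k'$ by invariance of semi-algebraic dimension. The argument contains no real obstacle; all the work sits in Theorem~\ref{thm:basis-independence-schanuel-euler} and Proposition~\ref{prop:semialgebraic-isomorphism-preserves-euler}.
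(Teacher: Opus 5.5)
Your proposal is correct and matches the paper's proof: reduce to $(0,1)^k$ via Proposition~\ref{prop:semialgebraic-isomorphism-preserves-euler}, then use iterated Schanuel integration, which gives a factor of $-1$ for each open-interval coordinate. Your Step~3 (the direct computation for CAD cells from the proof of Proposition~\ref{prop:semialgebraic-indicator-schanuel-integrable}, and the remark on well-definedness of $k$) is a harmless addition that the statement does not need.
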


\begin{proof}
By Proposition~\ref{prop:semialgebraic-isomorphism-preserves-euler}, it is
enough to compute the Euler characteristic of \((0,1)^k\).  Successive Schanuel integration gives one factor of \(-1\) for each open
interval coordinate.  Hence
\[
  \chi((0,1)^k)=(-1)^k.
\]
\end{proof}

It follows that if a semi-algebraic set \(A\) is decomposed into finitely many
CAD cells \(C\in\mathcal A\), then
\[
\chi(A)
=
\sum_{C\in\mathcal A}(-1)^{\dim C}.
\]

On the other hand, the Borel--Moore Euler characteristic is additive with
respect to finite decompositions into locally closed semi-algebraic sets and
takes the value \((-1)^k\) on every open \(k\)-cell~\cite{coste2005real}. Hence it is given by the
same cell-counting formula. Consequently, the Schanuel--Euler characteristic
coincides with the classical Euler characteristic defined via
Borel--Moore homology.

\begin{corollary}
\label{cor:compact-simplicial-euler-characteristic}
Let \(A\) be a compact semi-algebraic set, and let \(|K|\) be a finite
simplicial complex semi-algebraically homeomorphic to \(A\).  Then
\[
  \chi(A)
  =
  \sum_{\sigma\in K} (-1)^{\dim \sigma}.
\]
\end{corollary}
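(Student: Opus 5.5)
Let \(h:|K|\to A\) be a semi-algebraic homeomorphism; here \(|K|\subseteq\mathbb{R}^N\) is the geometric realization. The plan is to move the problem to \(|K|\) and then decompose it into open simplices. Since \(A\) is compact and \(h\) is a homeomorphism, \(h\) is a semi-algebraic isomorphism in the sense of Proposition~\ref{prop:semialgebraic-isomorphism-preserves-euler}: a continuous semi-algebraic bijection whose inverse is also continuous and semi-algebraic, because the graph of \(h^{-1}\) is the transposed graph of \(h\). That proposition then gives \(\chi(A)=\chi(|K|)\). It therefore suffices to compute \(\chi(|K|)\) for the polyhedron \(|K|\subseteq\mathbb{R}^N\) itself.

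Next, \(|K|\) is the disjoint union of the relative interiors \(\mathring\sigma\) of its simplices \(\sigma\in K\). Each \(\mathring\sigma\) is a semi-algebraic set, indeed a relatively open convex set. By additivity (the linearity in the Proposition of Section~2, combined with the integrability given by Proposition~\ref{prop:semialgebraic-indicator-schanuel-integrable}),
\[
  \chi(|K|)=\sum_{\sigma\in K}\chi(\mathring\sigma).
\]
It remains to show that \(\chi(\mathring\sigma)=(-1)^{\dim\sigma}\). There are two routes, and I would record both. The first uses the fact stated in Section~2 that \(\int_\varepsilon \mathbf 1_U=(-1)^{\dim U}\) for a relatively open convex set \(U\), which gives the value directly. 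The second goes through Corollary~\ref{cor:cell-euler-characteristic}. An open \(k\)-simplex is semi-algebraically isomorphic to an open cube \((0,1)^k\), for instance via an explicit piecewise-rational map. More simply, \(\mathring\sigma\) is affinely isomorphic to \(\{t\in\mathbb{R}^k: t_i>0,\ \sum t_i<1\}\), and this set is itself a CAD-type cell: it is a band over a lower-dimensional open simplex in the last coordinate. Induction then yields \((-1)^k\).

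The main obstacle is only the justification that the semi-algebraic homeomorphism qualifies as an isomorphism for the earlier proposition. That proposition requires the graphs \(\Gamma_h\) and \(\Gamma_{h^{-1}}\) to be semi-algebraic, which holds by definition of a semi-algebraic map. Compactness is not actually needed for this beyond guaranteeing that the triangulation exists. Combining the steps gives \(\chi(A)=\sum_{\sigma\in K}(-1)^{\dim\sigma}\).
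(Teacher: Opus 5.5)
Your proposal is correct and follows the paper's proof essentially step for step. You transfer to $|K|$ via Proposition~\ref{prop:semialgebraic-isomorphism-preserves-euler}, decompose $|K|$ into relatively open simplices, and combine finite additivity with $\chi(\sigma^\circ)=(-1)^{\dim\sigma}$ from Corollary~\ref{cor:cell-euler-characteristic}; your alternative via the convex-set formula of Section~2 and your remark that compactness only serves to guarantee the triangulation are valid supplements rather than a different route.
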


\begin{proof}
By triangulation of compact semi-algebraic sets, \(A\) is
semi-algebraically homeomorphic to \(|K|\).  By
Proposition~\ref{prop:semialgebraic-isomorphism-preserves-euler}, it is enough
to compute \(\chi(|K|)\).  Decompose \(|K|\) into its relatively open simplices.
Each relatively open simplex \(\sigma^\circ\) is a semi-algebraic
\(\dim\sigma\)-cell, and hence
\[
  \chi(\sigma^\circ)=(-1)^{\dim\sigma}
\]
by Corollary~\ref{cor:cell-euler-characteristic}.  By finite additivity,
\[
  \chi(|K|)
  =
  \sum_{\sigma\in K}\chi(\sigma^\circ)
  =
  \sum_{\sigma\in K}(-1)^{\dim\sigma}.
\]
\end{proof}

\begin{proposition}[Coproduct formula]
\label{prop:coproduct-formula}
Let \(A\) and \(B\) be disjoint semi-algebraic sets.  Then
\[
  \chi(A\sqcup B)=\chi(A)+\chi(B).
\]
\end{proposition}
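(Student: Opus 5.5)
The coproduct formula should follow from two facts: disjointness turns the indicator of the union into a sum, and the Schanuel integral is linear. Since \(A\) and \(B\) are disjoint,
\[
  \mathbf 1_{A\sqcup B}=\mathbf 1_A+\mathbf 1_B
\]
pointwise on \(\mathbb R^n\).

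We may assume \(A,B\subseteq\mathbb R^n\) for the same \(n\). The union \(A\sqcup B\) only makes sense inside a common ambient space. If \(A\) and \(B\) are given in different spaces, we first embed both in some \(\mathbb R^N\) by appending zero coordinates and arrange disjointness there. This embedding does not change \(\chi\): an appended zero coordinate contributes a fiber that is a single point, whose Schanuel integral is \(1\). (Equivalently, it is a special case of Proposition~\ref{prop:semialgebraic-isomorphism-preserves-euler}.)

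Now fix the standard ordered basis. Each of \(A\), \(B\) and \(A\sqcup B\) is semi-algebraic. By Proposition~\ref{prop:semialgebraic-indicator-schanuel-integrable}, each of their indicator functions is Schanuel integrable with respect to this basis. The linearity proposition of Section~2 then gives
\[
  \int_\varepsilon \mathbf 1_{A\sqcup B}=\int_\varepsilon \mathbf 1_A+\int_\varepsilon \mathbf 1_B .
\]
By Theorem~\ref{thm:basis-independence-schanuel-euler}, each of these three integrals equals the corresponding \(\chi\), independently of the basis. This yields \(\chi(A\sqcup B)=\chi(A)+\chi(B)\).

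The only step needing care is linearity for iterated integrals: at each stage \(I_k\), the sum of the two functions must again be one-variable Schanuel integrable. The one-variable definition is a finite sum of limits, so it is linear whenever both summands are integrable, and induction on \(k\) carries this through all stages. There is therefore no real obstacle; the essential input is the basis independence already proved.
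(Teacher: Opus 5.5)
Your proof is correct and is essentially the paper's: disjointness gives $\mathbf 1_{A\sqcup B}=\mathbf 1_A+\mathbf 1_B$, and linearity of the iterated Schanuel integral then yields the formula, with integrability from Proposition~\ref{prop:semialgebraic-indicator-schanuel-integrable} and basis independence from Theorem~\ref{thm:basis-independence-schanuel-euler}. The paragraph on different ambient spaces is unnecessary, since disjointness already presupposes a common $\mathbb R^n$; note also that appending zero coordinates would not by itself make the sets disjoint.
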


\begin{proof}
Since \(A\) and \(B\) are disjoint,
\[
  \mathbf{1}_{A\sqcup B}
  =
  \mathbf{1}_A+\mathbf{1}_B .
\]
The result follows from the additivity of Schanuel integration.
\end{proof}

\begin{corollary}[Inclusion--exclusion]
\label{cor:inclusion-exclusion}
Let \(A\) and \(B\) be semi-algebraic sets.  Then
\[
  \chi(A\cup B)
  =
  \chi(A)+\chi(B)-\chi(A\cap B).
\]
\end{corollary}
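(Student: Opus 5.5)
The plan is to reduce inclusion--exclusion to the coproduct formula (Proposition~\ref{prop:coproduct-formula}) by cutting \(A\cup B\) and \(A\) into disjoint semi-algebraic pieces. First, \(A\setminus B\), \(B\setminus A\) and \(A\cap B\) are semi-algebraic, since the semi-algebraic sets are closed under finite Boolean operations. They are pairwise disjoint, and
\[
  A\cup B=(A\setminus B)\sqcup(A\cap B)\sqcup(B\setminus A),
  \qquad
  A=(A\setminus B)\sqcup(A\cap B),
  \qquad
  B=(B\setminus A)\sqcup(A\cap B).
\]

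Next I apply Proposition~\ref{prop:coproduct-formula} to each decomposition; for the three-piece one I use it twice. This gives
\[
  \chi(A\cup B)=\chi(A\setminus B)+\chi(A\cap B)+\chi(B\setminus A),
\]
together with \(\chi(A\setminus B)=\chi(A)-\chi(A\cap B)\) and \(\chi(B\setminus A)=\chi(B)-\chi(A\cap B)\). Substituting the last two into the first yields the claim.

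Alternatively, and more directly, one can use the pointwise identity of indicator functions
\[
  \mathbf 1_{A\cup B}+\mathbf 1_{A\cap B}=\mathbf 1_A+\mathbf 1_B .
\]
All four indicators are Schanuel integrable by Proposition~\ref{prop:semialgebraic-indicator-schanuel-integrable}, so linearity of the Schanuel integral, computed in any fixed ordered basis, gives the formula at once. By Theorem~\ref{thm:basis-independence-schanuel-euler} each term is basis independent, so the equality is an identity of Euler characteristics.

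I do not expect a real obstacle here. The only point needing care is confirming that every set involved is semi-algebraic, so that the earlier integrability and well-definedness results apply; closure of semi-algebraic sets under complement and intersection gives this.
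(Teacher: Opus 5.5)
Your proposal is correct and follows essentially the paper's proof, which takes the same three disjoint decompositions of \(A\cup B\), \(A\), and \(B\) and applies the coproduct formula (Proposition~\ref{prop:coproduct-formula}). Your alternative via the identity \(\mathbf 1_{A\cup B}+\mathbf 1_{A\cap B}=\mathbf 1_A+\mathbf 1_B\) together with linearity is also valid; it is just the same additivity argument applied directly at the level of indicator functions.
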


\begin{proof}
We have the disjoint decompositions
\[
  A\cup B
  =
  (A\setminus B)\sqcup (B\setminus A)\sqcup (A\cap B),
\]
\[
  A
  =
  (A\setminus B)\sqcup (A\cap B),
  \qquad
  B
  =
  (B\setminus A)\sqcup (A\cap B).
\]
Applying Proposition~\ref{prop:coproduct-formula} to these decompositions gives
the desired identity.
\end{proof}

\begin{proposition}[Product formula]
\label{prop:product-formula}
Let \(A\subseteq \mathbb{R}^n\) and
\(B\subseteq \mathbb{R}^m\) be semi-algebraic sets.  Then
\[
  \chi(A\times B)=\chi(A)\chi(B).
\]
Equivalently,
\[
  \int_{\varepsilon}\mathbf{1}_{A\times B}\,
  dx_{n+m}\cdots dx_1
  =
  \left(
    \int_{\varepsilon}\mathbf{1}_A\,dx_n\cdots dx_1
  \right)
  \left(
    \int_{\varepsilon}\mathbf{1}_B\,dx_m\cdots dx_1
  \right).
\]
\end{proposition}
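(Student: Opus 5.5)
The plan is to compute the iterated Schanuel integral of \(\mathbf{1}_{A\times B}\) in the standard coordinate order on \(\mathbb{R}^{n+m}\), in which the last \(m\) coordinates (those of \(B\)) are integrated first. By Theorem~\ref{thm:basis-independence-schanuel-euler}, \(A\times B\) is semi-algebraic, so its Schanuel integral is well defined and independent of the basis. We may therefore use this particular ordering.

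First, fix \(a=(a_1,\dots,a_n)\in\mathbb{R}^n\) and look at the function
\[
  (x_{n+1},\dots,x_{n+m})\longmapsto \mathbf{1}_{A\times B}(a,x_{n+1},\dots,x_{n+m})=\mathbf{1}_A(a)\,\mathbf{1}_B(x_{n+1},\dots,x_{n+m}).
\]
Here \(\mathbf{1}_A(a)\) is a constant, either \(0\) or \(1\). By Proposition~\ref{prop:semialgebraic-indicator-schanuel-integrable}, \(\mathbf{1}_B\) is Schanuel integrable on \(\mathbb{R}^m\). The homogeneity part of the linearity proposition, applied at each of the \(m\) one-variable integration steps, then shows that the intermediate functions satisfy
\[
  I^{\mathbf{1}_{A\times B}}_{n+k}(a,x_{n+1},\dots,x_{n+k})=\mathbf{1}_A(a)\, I^{\mathbf{1}_B}_k(x_{n+1},\dots,x_{n+k}),\qquad 0\le k\le m.
\]
This is proved by downward induction on \(k\). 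At each step we integrate in the variable \(x_{n+k}\) while the remaining variables, together with \(a\), are held fixed. The factor \(\mathbf{1}_A(a)\) does not depend on \(x_{n+k}\), so it passes through the one-dimensional Schanuel integral. At \(k=0\) this yields
\[
  I^{\mathbf{1}_{A\times B}}_n(a)=\chi(B)\,\mathbf{1}_A(a),
\]
which is a constant multiple of \(\mathbf{1}_A\).

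Second, we carry out the remaining \(n\) integrations. Again by linearity (homogeneity) and Proposition~\ref{prop:semialgebraic-indicator-schanuel-integrable} for \(A\), these integrations take \(\chi(B)\mathbf{1}_A\) to \(\chi(B)\chi(A)\). Combining the two steps gives
\[
  \chi(A\times B)=\chi(A)\chi(B).
\]

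I do not expect a genuine obstacle here. The only point requiring care is the bookkeeping in the recursive definition: the one-variable integrability conditions for the function of \(x_{n+k}\) obtained by freezing the other coordinates must be checked pointwise in \(a\). The fiber is either identically zero (when \(a\notin A\)) or a fiber of the iterated integrand for \(B\) (when \(a\in A\)), and both cases are integrable. Alternatively, one can read off the result from a CAD of \(A\times B\) built from product cells \(C\times D\), using Corollary~\ref{cor:cell-euler-characteristic} and \(\dim(C\times D)=\dim C+\dim D\). However, product cells need not form a CAD in the sense of Theorem~\ref{thm:cad}, so the direct computation above is the cleaner route.
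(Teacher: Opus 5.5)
Your proof is correct and takes essentially the same approach as the paper: integrate out the \(B\)-coordinates first, so that the inner iterated integral becomes a constant multiple of \(\mathbf{1}_A\), and then integrate over the \(A\)-coordinates. The only difference is that the paper first splits \(B\) into CAD cells \(C_i\), evaluates each inner integral as \((-1)^{\dim C_i}\mathbf{1}_A\), and re-sums to \(\chi(B)\mathbf{1}_A\); you apply homogeneity directly to \(\mathbf{1}_B\), which makes that decomposition unnecessary.
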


\begin{proof}
Choose a CAD of \(B\), and write \(B\) as a finite disjoint union of cells
\[
  B=C_1\sqcup\cdots\sqcup C_r.
\]
Then
\[
  A\times B
  =
  \bigsqcup_{i=1}^r A\times C_i.
\]
By finite additivity, it is enough to compute the Schanuel integral of
\(\mathbf{1}_{A\times C_i}\).

Fix a cell \(C=C_i\).  In the product coordinates on
\(\mathbb{R}^{n+m}=\mathbb{R}^n\times\mathbb{R}^m\), first integrate along the
\(B\)-coordinates.  Since \(C\) is a CAD cell, successive integration along
these coordinates gives
\[
  \int_{\varepsilon}\mathbf{1}_{A\times C}\,
  dx_{n+m}\cdots dx_{n+1}
  =
  \chi(C)\mathbf{1}_A
  =
  (-1)^{\dim C}\mathbf{1}_A .
\]
Therefore
\[
  \int_{\varepsilon}\mathbf{1}_{A\times C}\,
  dx_{n+m}\cdots dx_1
  =
  (-1)^{\dim C}
  \int_{\varepsilon}\mathbf{1}_A\,dx_n\cdots dx_1 .
\]
Summing over all cells of \(B\), we obtain
\[
\begin{aligned}
  \int_{\varepsilon}\mathbf{1}_{A\times B}\,
  dx_{n+m}\cdots dx_1
  &=
  \sum_{i=1}^r
  (-1)^{\dim C_i}
  \int_{\varepsilon}\mathbf{1}_A\,dx_n\cdots dx_1        \\
  &=
  \left(\sum_{i=1}^r (-1)^{\dim C_i}\right)
  \int_{\varepsilon}\mathbf{1}_A\,dx_n\cdots dx_1        \\
  &=
  \chi(B)\chi(A).
\end{aligned}
\]
This proves the product formula.
\end{proof}

\section*{Acknowledgements}

The author would like to thank Prof.\ Chen Beifang for introducing the Schanuel integration framework and for many valuable discussions.
The author also thanks Cao Ying and Wang Hongyang for helpful conversations related to this work.

\bibliographystyle{plain}
\bibliography{references}

\end{document}